\author{Giles Gardam}
\title{Finite groups of arbitrary deficiency}
\theoremstyle{plain}
\newtheorem{thm}{Theorem}
\newtheorem{lem}{Lemma}
\theoremstyle{definition}
\newtheorem{defn}[lem]{Definition}
\newtheorem{eg}[lem]{Example}
\newtheorem{qn}[lem]{Question}
\newtheorem{rmk}[lem]{Remark}
\newtheorem*{thmA}{Theorem \ref{thm:main}}
\newcommand{\Gp}{\mathcal{G}_p}
\newcommand{\N}{\mathbb{N}}
\newcommand{\Q}{\mathbb{Q}}
\newcommand{\Z}{\mathbb{Z}}
\newcommand{\colonset}[2]{\left\{ #1 \, : \, #2 \right\}}
\newcommand{\df}{\operatorname{def}}
\newcommand{\gp}[2]{\langle \, #1 \, | \, #2 \, \rangle}
\newcommand{\inthalf}[1]{\lfloor \frac{#1}{2} \rfloor}
\newcommand{\mytimes}{{\mkern-2mu\times\mkern-2mu}}
\thanks{This work was partially supported by the Clarendon Fund, Balliol College Marvin Bower Scholarship, and the James Fairfax Oxford Australia Scholarship}
\address{Mathematical Institute \\ Andrew Wiles Building \\ University of Oxford \\ Oxford, OX2~6GG \\ United Kingdom}
\curraddr{Department of Mathematics \\ Technion \\ Haifa \\ Israel}
\email{gilesgar@technion.ac.il}
\keywords{Deficiency, finite $p$-groups, Kähler groups, presentations, efficiency}
\subjclass[2010]{20F05, (20D15, 32J27)}
\begin{document}

\begin{abstract}
    The deficiency of a group is the maximum over all presentations for that group of the number of generators minus the number of relators.
    Every finite group has non-positive deficiency.
    We show that every non-positive integer is the deficiency of a finite group -- in fact, of a finite $p$-group for every prime $p$.
    This completes Kotschick's classification \cite{kotschick} of the integers which are deficiencies of fundamental groups of compact Kähler manifolds.
\end{abstract}

\maketitle

\section{Introduction}
The \emph{deficiency} of a finitely presented group is the maximum over all presentations for that group of the number of generators minus the number of relators (some authors use the opposite sign convention).
Every finite group has non-positive deficiency, since a group of deficiency at least $1$ has infinite abelianization.
For finite groups, most recent study of deficiency has focussed on finding deficiency zero presentations.
The celebrated work of Golod and Shafarevich implies that a finite $p$-group of rank $d$ has deficiency less than $-\frac{d^2}{4}+d$; this is one of many asymptotic results on deficiency of finite groups.
On the other hand, the range of techniques for determining deficiencies of groups precisely is very limited.
Specifically, the literature does not appear to contain a proof that all negative integers arise as deficiencies of finite groups.
(The \emph{infinite} case is easy: every integer is the deficiency of some $F_r * \Z^s$.)
Well-known examples of finite groups achieving \emph{arbitrarily large} negative deficiency, such as abelian groups and Swan's examples with trivial Schur multiplier \cite{swan}, have quadratically growing deficiency: the set of their negative deficiencies has density zero in $\N$.
Another example of our lack of understanding of the fine structure of deficiency is an open problem in the Kourovka Notebook \cite[8.12(a)]{kourovka}, due to D.~L.~Johnson and E.~F.~Robertson:  Does there exist a finite $p$-group of rank $3$ and deficiency zero for any $p \geq 5$?
For rank $d \geq 4$ no such finite $p$-group exists, for any prime $p$, by Golod--Shafarevich.

In this article we prove the following theorem, which shows that indeed all negative integers arise as deficiencies of finite groups.
The finite groups $A_p$, $B_p$ and $C_p$ -- which are parameterized by a prime $p$ -- are introduced in Definition~\ref{defn:3ingredients}.
\begin{thm}
    \label{thm:main}
    Let $p$ be a prime, and $n \in \N$.
    Then there are natural numbers $r$, $s$ and $t$ such that the finite $p$-group $A_p^r \times B_p^s \times C_p^t$ has deficiency $-n$.
\end{thm}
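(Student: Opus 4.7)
The plan is to reduce the theorem to an arithmetic problem about a quadratic function in three non-negative integer variables. The strategy rests on an exact formula for the deficiency of a direct product of finite $p$-groups whose presentations are \emph{efficient}, meaning they saturate the standard cohomological upper bound $\df(G) \leq d_p(G) - r_p(G)$, where $d_p(G) := \dim_{\mathbb{F}_p} H^1(G;\mathbb{F}_p)$ is the minimum number of generators of the finite $p$-group $G$ and $r_p(G) := \dim_{\mathbb{F}_p} H^2(G;\mathbb{F}_p)$. This bound is classical, coming from a minimal partial free resolution together with Hopf's formula.

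The first step is a product formula. The K\"unneth formula with $\mathbb{F}_p$ coefficients gives $d_p(G \times H) = d_p(G) + d_p(H)$ and $r_p(G \times H) = r_p(G) + r_p(H) + d_p(G)\, d_p(H)$, so the cohomological upper bound for a product reads $\df(G \times H) \leq \df(G) + \df(H) - d_p(G)\, d_p(H)$. The matching lower bound is obtained by combining efficient presentations of $G$ and $H$ with the $d_p(G)\, d_p(H)$ commutator relators $[x_i, y_j] = 1$ pairing their generators. Thus if $G$ and $H$ both admit efficient presentations, so does $G \times H$, and
\[
    \df(G \times H) = \df(G) + \df(H) - d_p(G)\, d_p(H).
\]
Iterating, $\df(A_p^r \times B_p^s \times C_p^t)$ becomes an explicit integer-valued quadratic polynomial $f(r, s, t)$ as soon as one verifies that each of $A_p$, $B_p$, $C_p$ admits an efficient presentation with known $\df$ and $d_p$.

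What remains is arithmetic: for every $n \in \N$, one must exhibit $(r, s, t) \in \N^3$ with $f(r, s, t) = -n$. The presence of three parameters is what makes this plausible; the three groups should be arranged so that, for instance, two of them suffice to realize the residue class of $-n$ modulo some small modulus while the third is iterated to match the magnitude.

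The main obstacle is the lower-bound half of the product formula, because the commutator trick only achieves the cohomological upper bound when both ingredient presentations already use the minimum number of generators; hence each of $A_p$, $B_p$, $C_p$ must be shown by hand to admit an efficient presentation, with a matching computation of $r_p$. A secondary concern is arranging the last step uniformly in $p$: the values $\df(A_p)$, $\df(B_p)$, $\df(C_p)$ and the generator numbers $d_p(A_p)$, $d_p(B_p)$, $d_p(C_p)$ will depend on $p$, and the three ingredients must be engineered so that their joint three-parameter family of deficiencies covers every non-positive integer simultaneously for each prime.
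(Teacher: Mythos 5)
Your overall strategy is exactly the paper's: show the cohomological bound is attained for each building block, derive the product formula $\df(G \times H) = \df(G) + \df(H) - d_p(G)\,d_p(H)$ from the K\"unneth theorem plus the commutator-relator construction, and then solve an inhomogeneous quadratic Diophantine problem. (Your $\mathbb{F}_p$-cohomology formulation of the upper bound is equivalent, for finite $p$-groups, to the paper's $\df(G) \leq -d(H_2(G))$, by universal coefficients.) However, as written the proposal stops precisely where the mathematical content begins, and the two steps you defer are genuine gaps, not routine verifications.

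First, efficiency of the building blocks is not something that can be waved at: for $A_p$ and $C_p$ one must actually prove the given two-generator, two-relator (resp.\ one-generator, one-relator) presentations define \emph{finite} $p$-groups, so that deficiency zero is automatically optimal; for $B_p$ one must compute that the Schur multiplier is $(\Z/p)^2$ (the paper cites Beyl--Tappe for odd $p$ and Tahara's computation for the modified group $B_2$), since only then does the $(2,4)$ presentation saturate the bound. Second, the arithmetic endgame is where the specific parameter pairs $(2,2)$, $(2,4)$, $(1,1)$ earn their keep, and your sketch (``two of them realize the residue class modulo some small modulus while the third matches the magnitude'') is not how the count actually closes. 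The deficiency of $A_p^r \times B_p^s \times C_p^t$ is $-\bigl(\binom{2r+2s+t}{2} + s - r\bigr)$; with $m = 2r+2s+t$ fixed, the correction $s - r$ sweeps out exactly the integers in $[-\lfloor m/2 \rfloor, \lfloor m/2 \rfloor]$ subject to $2(r+s) \leq m$, and one must check that these intervals centred at the triangular numbers $\binom{m}{2}$ cover all of $\N$ --- which holds because consecutive triangular numbers differ by $m-1 = \lfloor (m-1)/2\rfloor + \lfloor m/2 \rfloor$, so the intervals abut with no gap. Without this argument (or an equivalent one) the theorem is not proved; note also that your worry about $p$-dependence evaporates once the groups are chosen as in the paper, since the generator--relator pairs are uniform in $p$.
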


A \emph{Kähler group} is the fundamental group of a compact Kähler manifold; such a group is always finitely presented.
The class of Kähler groups includes all finite groups \cite{serre}, as well as surface groups and more generally the fundamental groups of complex projective varieties.
Kotschick proved in \cite{kotschick} that no Kähler group has even positive deficiency, and noted that this is the only constraint on positive deficiency for Kähler groups, as all odd positive integers arise as deficiencies of surface groups~$\Sigma_g$.
He then gave examples of Kähler groups of all negative deficiencies except for -5 and -7, with the suggestion that these should be achievable with finite groups (see Section~6 in~\cite{kotschick}).

Theorem~\ref{thm:main} completes, with proof, the classification of deficiencies of Kähler groups, as suggested by Kotschick.

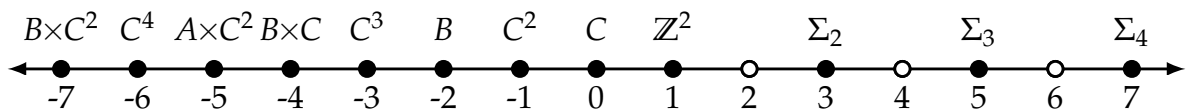
\begin{figure}[!h]

\begingroup \resizebox {\columnwidth} {!} {

\begin{tikzpicture}[thick,scale=0.75, every node/.style={scale=0.75}]
\draw[latex-latex] (0.3,0) -- (15.7,0);
\foreach \x  in {1,...,15}
  \draw[xshift=\x cm] (0pt,2pt) -- (0pt,-2pt) node[below,fill=white,text height=1.5ex, text depth=.25ex, text width=3em, text centered] {\the\numexpr\x -8\relax};
\node[text height=1.5ex, text depth=.25ex, text width=3em, text centered] at (1, 0.5) {$B \mytimes C^2$};
\node[text height=1.5ex, text depth=.25ex, text width=3em, text centered] at (2, 0.5) {$C^4$};
\node[text height=1.5ex, text depth=.25ex, text width=3em, text centered] at (3, 0.5) {$A \mytimes C^2$};
\node[text height=1.5ex, text depth=.25ex, text width=3em, text centered] at (4, 0.5) {$B \mytimes C$};
\node[text height=1.5ex, text depth=.25ex, text width=3em, text centered] at (5, 0.5) {$C^3$};
\node[text height=1.5ex, text depth=.25ex, text width=3em, text centered] at (6, 0.5) {$B$};
\node[text height=1.5ex, text depth=.25ex, text width=3em, text centered] at (7, 0.5) {$C^2$};
\node[text height=1.5ex, text depth=.25ex, text width=3em, text centered] at (8, 0.5) {$C$};
\node[text height=1.5ex, text depth=.25ex, text width=3em, text centered] at (9, 0.5) {$\Z^2$};
\node[text height=1.5ex, text depth=.25ex, text width=3em, text centered] at (11, 0.5) {$\Sigma_2$};
\node[text height=1.5ex, text depth=.25ex, text width=3em, text centered] at (13, 0.5) {$\Sigma_3$};
\node[text height=1.5ex, text depth=.25ex, text width=3em, text centered] at (15, 0.5) {$\Sigma_4$};

\node[fill=black,draw=black,circle,inner sep=2pt] at (1,0) {};
\node[fill=black,draw=black,circle,inner sep=2pt] at (2,0) {};
\node[fill=black,draw=black,circle,inner sep=2pt] at (3,0) {};
\node[fill=black,draw=black,circle,inner sep=2pt] at (4,0) {};
\node[fill=black,draw=black,circle,inner sep=2pt] at (5,0) {};
\node[fill=black,draw=black,circle,inner sep=2pt] at (6,0) {};
\node[fill=black,draw=black,circle,inner sep=2pt] at (7,0) {};
\node[fill=black,draw=black,circle,inner sep=2pt] at (8,0) {};
\node[fill=black,draw=black,circle,inner sep=2pt] at (9,0) {};
\node[fill=black,draw=black,circle,inner sep=2pt] at (10,0) {};
\node[fill=black,draw=black,circle,inner sep=2pt] at (11,0) {};
\node[fill=black,draw=black,circle,inner sep=2pt] at (12,0) {};
\node[fill=black,draw=black,circle,inner sep=2pt] at (13,0) {};
\node[fill=black,draw=black,circle,inner sep=2pt] at (14,0) {};
\node[fill=black,draw=black,circle,inner sep=2pt] at (15,0) {};

\node[fill=white,draw=black,circle,inner sep=2pt] at (10,0) {};
\node[fill=white,draw=black,circle,inner sep=2pt] at (12,0) {};
\node[fill=white,draw=black,circle,inner sep=2pt] at (14,0) {};
\end{tikzpicture}
}
\endgroup
\caption{Deficiencies of Kähler groups}
\end{figure}

A search of the literature on deficiencies of finite groups suggests that one can extract examples as needed by Kotschick from the work of Sag and Wamsley, who claimed to have computed the deficiency of every group of order $2^n$ for $n \leq 6$ \cite{sagwamsley}.
However, they did not publish proofs, and the article does contain a number of errors beyond the obvious misprints: some presentations are not \emph{efficient} as claimed, and others do not define the groups they should.
(To give one concrete example, the 252\textsuperscript{nd} presentation of a group of order 64 is in fact a presentation of $\Z/4 \rtimes \Z/4$, for either commutator convention.)
\begin{collectors}
In future work, we will give a thorough analysis of their article and what is known about the deficiencies of small $2$-groups.
\end{collectors}

The outline of our broad strategy to construct finite groups of arbitrary deficiency -- and of the structure of this article -- is as follows.
We introduce in Section~\ref{section:control} the class $\Gp$ of efficient $p$-groups, in which we fully understand the deficiency of direct products (a quadratic polynomial).
After finding enough basic examples in Section~\ref{section:buildingblocks}, we take repeated direct products in suitable combinations to obtain all negative integers as deficiencies; this analysis is the topic of Section~\ref{section:construction}.
In analogy with results on the representability of positive integers by quadratic forms, such as the Conway--Schneeberger Fifteen Theorem \cite{bhargava}, there is good reason to expect such a strategy to succeed if we find enough basic building blocks in the class $\Gp$.
The fact that the deficiency of our direct product is an inhomogeneous quadratic polynomial, rather than a quadratic form, makes the analysis easier, and we in fact only need the three basic examples $A_p$, $B_p$ and $C_p$.

\section{Controlling deficiency}
\label{section:control}

For a group $G$, let $d(G)$ denote the minimal size of a generating set for $G$, which we call the \emph{rank} of $G$.
The homology groups $H_*(G)$ are implicitly taken with trivial $\Z$ coefficients.
In particular, the abelianization~$G / G'$ is isomorphic to $H_1(G)$.
The deficiency of a group $G$ is bounded above by
\begin{equation}
    \label{align:inequality}
    \tag{$\star$}
    \df(G) \leq \operatorname{rk}(H_1(G)) - d(H_2(G))
\end{equation}
where $\operatorname{rk}$ denotes the torsion-free rank of an abelian group: $\operatorname{rk}(G) = \operatorname{rk}_\Q (G \otimes_\Z \Q)$.
For a proof of this well-known inequality, the reader is referred to, for example, \cite[Lemma 2]{bridsontweedale} (NB: that article uses the opposite sign convention for deficiency).
If a group achieves equality in \eqref{align:inequality}, then it is called \emph{efficient}.
The torsion-free rank of every finite group is zero, so the upper bound on deficiency of a finite group is simply minus the rank of the Schur multiplier $H_2(G)$.

A presentation realizing the deficiency of a group $G$ is called \emph{minimal} if it moreover has the minimal possible number of generators, namely the rank $d(G)$.
Note that we are asking more of a `minimal' presentation than other authors; for example, \cite[\S 4]{gruenberg_relation_modules} only requires the number of generators of the group to be $d(G)$ with no requirements on the number of relators.
A group can admit a minimal presentation without being efficient (for example, Lustig's non-efficient torsion-free example $\gp{x,y,z}{x^2=y^3, [x,z], [y,z]}$ \cite{lustig} actually admits the minimal presentation $\gp{a,b}{[a, b^3], [a^2, b]}$, with an isomorphism given by $a \mapsto xz$, $b \mapsto yz$).
Indeed, it is an open problem whether every group admits a minimal presentation.
Rapaport proved \cite{rapaport} that this is the case for one-relator groups and nilpotent groups.

One class of finite groups where deficiency is reasonably understood is the class $\Gp$, for $p$ a prime, as defined in \cite{johnson}.

\begin{defn}
    The class $\Gp$ denotes the finite $p$-groups $G$ such that $G$ is efficient and admits a minimal presentation.
\end{defn}

In particular, the number of relators of such a presentation is simply $d(H_1(G)) + d(H_2(G))$, since every finite $p$-group $G$ (indeed, every nilpotent group) satisfies $d(G) = d(H_1(G))$.
(By Rapaport's theorem, one could remove the requirement of admitting a minimal presentation from the definition of $\Gp$.)

In fact, there is no known example of a non-efficient finite $p$-group.
\begin{qn}[{\cite[Question 18]{mann:questions}}]
    Is every finite $p$-group an element of $\mathcal{G}_p$?
\end{qn}

The class $\Gp$ has been shown to be closed under various operations.
For our purposes here, we only need closure under direct products as proved in \cite{johnson}.
Since it is short and instructive, we include here a proof of this fact.

\begin{lem}
    Let $G, H \in \Gp$.
    Then $G \times H \in \Gp$.
    Moreover, if minimal presentations are $G = \gp{X}{R}$, $H = \gp{Y}{S}$, then a minimal presentation for $G \times H$ is \[
        \gp{X \sqcup Y}{R \sqcup S \sqcup \colonset{[x,y]}{x \in X, y \in Y}}.
    \]
\end{lem}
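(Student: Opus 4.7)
The plan is to take the proposed presentation and verify three things: that it indeed presents $G \times H$, that its number of generators equals $d(G \times H)$, and that the deficiency it achieves matches the upper bound $(\star)$ for $G \times H$. The first two will show that the presentation is minimal; the third that $G \times H$ is efficient.

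For the first step, the commutators $[x,y]$ force the images of $X$ and $Y$ to commute, so the natural map from the free group on $X \sqcup Y$ modulo the stated relators factors through $G \times H$; a routine check (using that $R$ and $S$ suffice for $G$ and $H$) shows this factorisation is an isomorphism. For the generator count, since $G$ and $H$ are finite $p$-groups the Burnside basis theorem gives $d(G) = d(H_1(G))$, and similarly for $H$. Moreover $\Phi(G \times H) = \Phi(G) \times \Phi(H)$, so $d(G \times H) = d(G) + d(H) = |X| + |Y|$ matches the generator count of the proposed presentation.

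The deficiency calculation is the crux. Because $G \in \Gp$ admits the minimal efficient presentation $\gp{X}{R}$, equality holds in $(\star)$, so $|R| = d(G) + d(H_2(G))$; analogously $|S| = d(H) + d(H_2(H))$. The proposed presentation therefore has deficiency
\[
    (|X| + |Y|) - (|R| + |S| + |X|\cdot|Y|) = -d(H_2(G)) - d(H_2(H)) - d(G)\,d(H).
\]
To see this meets $(\star)$ for $G \times H$, apply the Künneth formula. Since $H_0(-) = \Z$ is free, the Tor terms in degree $2$ vanish, leaving
\[
    H_2(G \times H) \cong H_2(G) \oplus \bigl(H_1(G) \otimes H_1(H)\bigr) \oplus H_2(H).
\]
The invariant $d$ is additive on direct sums of finite abelian $p$-groups (via reduction mod $p$), and is multiplicative on tensor products of such groups, since $\Z/p^a \otimes \Z/p^b \cong \Z/p^{\min(a,b)}$ is a nonzero cyclic $p$-group for $a,b \geq 1$. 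Therefore $d(H_2(G \times H)) = d(H_2(G)) + d(H_2(H)) + d(G)\,d(H)$, which is exactly what is needed.

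The main obstacle is this last step: correctly identifying $H_2$ of the direct product and keeping track of how $d$ behaves on tensor products of finite abelian $p$-groups. Once that Künneth bookkeeping is in hand, the rest is just unpacking the definition of $\Gp$ and applying the Burnside basis theorem.
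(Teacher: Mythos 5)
Your proposal is correct and follows essentially the same route as the paper: verify the presentation defines $G \times H$ with $d(G)+d(H)$ generators, then use the Künneth formula $H_2(G \times H) \cong H_2(G) \oplus H_2(H) \oplus (H_1(G) \otimes H_1(H))$ together with additivity of $d$ on direct sums and multiplicativity on tensor products of finite abelian $p$-groups to match the presentation's deficiency to the bound $(\star)$. The only difference is that you spell out a few steps (the isomorphism check and the behaviour of $d$ on tensor products) that the paper leaves implicit.
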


\begin{proof}
    The above is a finite presentation of the finite $p$-group $G \times H$, and it has the required number of generators as $d(G \times H) = d(G) + d(H)$, since $G$ and $H$ are finite $p$-groups.
    It thus remains to prove that this is an efficient presentation, that is, that $G \times H$ has deficiency $-d(H_2(G \times H))$.

    Recall that all the homology groups of a finite $p$-group are finite abelian $p$-groups.
    The special case of the Künneth formula proved by Schur states that \[
        H_2(G \times H) \cong H_2(G) \oplus H_2(H) \oplus (H_1(G) \otimes_\Z H_1(H)).
    \]
    As all four terms on the right-hand side are finite abelian $p$-groups, we see that
    \begin{align*}
        d(H_2(G \times H)) &= d(H_2(G)) + d(H_2(H)) + d(H_1(G)) \cdot d(H_1(H)) \\
                           &= |R| - |X| + |S| - |Y| + |X| \cdot |Y|
    \end{align*}
    and thus the presentation is efficient.
\end{proof}

\begin{collectors}
\begin{rmk}
    As pointed out to us by Derek Holt, it is not difficult to attain finite groups of arbitrary (finite abelian) Schur multiplier.
    For every $n \geq 2$, if we choose $q$ to be a prime power congruent to $1$ modulo $n$, then $\operatorname{PSL}(n, q)$ is simple and has Schur multiplier cyclic of order $n$.
    (In the case $n = 2$ we require additionally $q \neq 3,9$ and for $n = 3$ we need $q \neq 4$.)
    These groups are all perfect, so the Schur multiplier of a direct product of any of them will simply be the direct product of the Schur multipliers.
    Being perfect (and moreover simple) these groups are as far from the classes $\Gp$ as one could imagine for finite efficient groups, and there is no obvious way to construct deficiency zero presentations of their direct products.
\end{rmk}
\end{collectors}

\section{Building blocks}
\label{section:buildingblocks}

Fix a prime $p$. To construct our desired groups of arbitrary negative deficiency, we only need the following three groups from $\Gp$. Beyond being members of $\Gp$, the only relevant property of these basic examples is that the number of generators and relators in their minimal presentations are the pairs $(2,2), (2,4)$ and $(1,1)$.

\begin{defn}
    \label{defn:3ingredients}
    Define groups by the presentations
    \begin{alignat*}{2}
        A_p &:= \gp{a,b}{a^p = b^p, \,a^b = a^{p+1}} \\
        B_p &:= \gp{a,b}{a^p, \,b^p, \,[[a,b], a], \,[[a,b], b]} \\
        C_p &:= \gp{a}{a^p}
    \end{alignat*}
    except when $p = 2$, where we define $B_2 := \gp{a,b}{a^4, b^4, (ab)^2, (a^{-1}b)^2}$.
\end{defn}

\begin{lem}
    $A_p$, $B_p$ and $C_p$ are all elements of $\Gp$ and the above presentations are minimal.
\end{lem}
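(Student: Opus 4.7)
My plan is to verify, for each $G \in \{A_p, B_p, C_p\}$, that $G$ is a finite $p$-group with $d(G)$ equal to the number of generators in the given presentation and with $d(H_1(G)) + d(H_2(G))$ equal to the number of relators. Since $d(G) = d(H_1(G))$ for finite $p$-groups and $d(H_1(G))$ is read off the abelianisation of the presentation, the substance is pinning down $|G|$ (to confirm finiteness) and the rank of the Schur multiplier $H_2(G)$.

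The case $C_p$ is immediate. For $A_p$, I would use $a^b = a^{p+1}$ to get $[a, b] = a^p$, set $c := a^p = b^p$, and exploit $c = b^p$ to obtain $c^b = c$; equating this with $c^b = (a^p)^b = a^{p(p+1)}$ forces $a^{p^2} = 1$, so $c$ has order dividing $p$ and is central. Then $A_p / \langle c \rangle$ is abelian on two generators of order $p$, so $|A_p| \le p^3$, with equality realised by a surjection onto the non-abelian group of order $p^3$ and exponent $p^2$ (which is $Q_8$ when $p = 2$). Once $A_p$ is known to be finite with $d(H_1(A_p)) = 2$, the efficiency of the $(2,2)$-presentation is automatic: the presentation gives $\df(A_p) \ge 0$, while the inequality $\df(A_p) \le -d(H_2(A_p))$ forces equality, so $d(H_2(A_p)) = 0$ comes for free. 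For $B_p$ with $p$ odd, the outer commutator relations place $B_p$ in nilpotency class at most $2$, and the identity $[a^p, b] = [a, b]^p$ available in class $2$ turns $a^p = 1$ into $[a, b]^p = 1$, identifying $B_p$ as the extraspecial $p$-group of order $p^3$ and exponent $p$.

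Unlike the previous cases, showing that the $(2,4)$-presentation of $B_p$ is efficient requires the independent lower bound $d(H_2(B_p)) \ge 2$, and this is the main obstacle: the presentation alone only gives $d(H_2) \le 2$. For $p$ odd I would invoke the classical computation that the Schur multiplier of the extraspecial $p$-group of order $p^3$ and exponent $p$ is $(\Z/p)^2$. For $B_2$ I would first derive from the relations that $a^2$ and $b^2$ are central of order at most $2$ and that $[a, b] = a^2 b^2$, bounding $|B_2| \le 16$; a concrete order-$16$ realisation of the four relations fixes the isomorphism type, and $H_2(B_2) \cong (\Z/2)^2$ then follows either by applying Hopf's formula $H_2(G) \cong (R \cap [F, F])/[F, R]$ directly to the given four-relator presentation, or by reading off the Schur multiplier of the identified group of order $16$ from standard tables of small $2$-groups.
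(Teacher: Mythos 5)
Your proposal is correct and follows essentially the same route as the paper: reduce minimality to efficiency via $d(G)=d(H_1(G))$, note that the deficiency-zero presentations of $A_p$ and $C_p$ are automatically efficient once finiteness is established, and handle $B_p$ by identifying it (Heisenberg group for odd $p$, an order-16 group for $p=2$) and quoting the known Schur multiplier $(\Z/p)^2$. Your finiteness argument for $A_p$ (via $c=a^p=b^p$ central of order dividing $p$ with abelian quotient) is a minor, equally valid variant of the paper's commutator computation.
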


\begin{proof}
First note that for each of the three presentations, the number of generators equals the rank of the abelianization, so minimality will follow once we establish that the presentations achieve the deficiency of their respective groups.
Both $A_p$ and $C_p \cong \Z/p$ are given by deficiency zero presentations, so to show they are elements of $\Gp$ it remains only to show that $A_p$ is a finite $p$-group.
(In fact, $A_2 \cong Q_8$ and $A_p \cong \Z/p^2 \rtimes \Z/p$ for odd $p$, but this is not needed for the proof.)

In $A_p$, the relation $a^b = a^{p+1}$ can be written as $[a, b] = a^p$, so since $a^p = b^p$ the commutator $[a, b]$ is central.
Thus $[a, b]^p = [a^p, b] = [b^p, b] = 1$, so $a^{p^2} = (a^p)^p = [a, b]^p = 1$, and likewise $b^{p^2} = 1$, so $A_p$ is nilpotent and generated by $p$-torsion, hence a finite $p$-group.

Table 8.1 in \cite{karpilovsky} lists $B_2$, of order 16, as $G_{15}$, with Schur multiplier $(\Z/2)^2$, as proved in \cite{tahara}.
For odd $p$, $B_p$ is the mod-$p$ Heisenberg group (of order $p^3$ and exponent $p$), with Schur multiplier $(\Z/p)^2$ \cite[4.16]{beyltappe}.
Thus $B_p \in \Gp$.
\end{proof}

\section{The construction}
\label{section:construction}

We can now prove the main theorem, which we recall for the convenience of the reader.

\begin{thmA}
    Let $p$ be a prime, and $n \in \N$.
    Then there are natural numbers $r$, $s$ and $t$ such that the finite $p$-group $A_p^r \times B_p^s \times C_p^t$ has deficiency $-n$.
\end{thmA}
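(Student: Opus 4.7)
The plan is to compute the deficiency of $G_{r,s,t} := A_p^r \times B_p^s \times C_p^t$ as an explicit quadratic in $(r,s,t)$ and then realize every $n \in \N$ by a direct parameter search.

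For Step~1 I would iterate the direct-product lemma from Section~\ref{section:control}: by induction on the number of factors, if $G_1, \ldots, G_k \in \Gp$ have minimal presentations $\gp{X_i}{R_i}$, then $G_1 \times \cdots \times G_k \in \Gp$ is presented by $\bigsqcup X_i$ modulo $\bigsqcup R_i$ together with commutators $[x,y]$ for each pair of generators from distinct factors. Specialising to $G_{r,s,t}$ and using $d(H_1(A_p)) = d(H_1(B_p)) = 2$, $d(H_1(C_p)) = 1$, with $d := 2r+2s+t$, the added commutators number $\tfrac{1}{2}(d^2 - 4r - 4s - t)$, and a short calculation gives
\[
\df(G_{r,s,t}) = 2r + \tfrac{t}{2} - \tfrac{d^2}{2}.
\]
Equivalently, $\df(G_{r,s,t}) = -n$ iff $n = \binom{d}{2} + (s - r)$, subject to $t = d - 2r - 2s \geq 0$, i.e.\ $r + s \leq \lfloor d/2 \rfloor$.

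For Step~2, I would observe that for fixed $d \geq 1$, as $(r,s)$ ranges over non-negative integer pairs with $r + s \leq \lfloor d/2 \rfloor$, the difference $s - r$ attains every integer $m$ with $|m| \leq \lfloor d/2 \rfloor$: take $(r,s) = (0, m)$ if $m \geq 0$ and $(-m, 0)$ if $m < 0$. Hence the values of $n$ realized at this $d$ are exactly the integers in
\[
I_d := \Bigl[\binom{d}{2} - \lfloor d/2 \rfloor,\; \binom{d}{2} + \lfloor d/2 \rfloor\Bigr].
\]

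For Step~3, the identity $\lfloor d/2 \rfloor + \lfloor (d+1)/2 \rfloor = d$ gives $\max I_d = \min I_{d+1}$, so the intervals $I_1, I_2, \ldots$ cover $\N$. Given any $n$, pick a $d$ with $n \in I_d$ and read off $(r,s,t)$ from the formula above.

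I do not anticipate a real obstacle: the direct-product lemma and the minimal-presentation data $(2,2)$, $(2,4)$, $(1,1)$ from Section~\ref{section:buildingblocks} do the real work, and the combinatorial covering in Steps~2 and 3 is immediate once Step~1 has been expressed in terms of $d$ and $s - r$. The only point requiring genuine care is the commutator bookkeeping in Step~1.
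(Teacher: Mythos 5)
Your proposal is correct and follows essentially the same route as the paper: both derive the identity $\df = -\bigl(\binom{2r+2s+t}{2} + s - r\bigr)$ from the direct-product lemma applied to the $(2,2)$, $(2,4)$, $(1,1)$ building blocks, and both solve the resulting Diophantine condition using the identity $\lfloor d/2\rfloor + \lfloor (d+1)/2\rfloor = d$. Your repackaging of the final step as overlapping intervals $I_d$ with $\max I_d = \min I_{d+1}$ is only a cosmetic variant of the paper's choice of the smallest $m$ with $\binom{m}{2} + \lfloor m/2\rfloor \geq n$.
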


\begin{proof}
    We first compute the deficiency of $A_p^r \times B_p^s \times C_p^t$.
    Recall that the number of generators and relators in minimal presentations for these groups are $(2, 2)$, $(2, 4)$ and $(1, 1)$ respectively.
    The standard presentation for their direct product will have $2r + 2s + t$ generators.
    There are $2r + 4s + t$ relators coming from the relators of each direct factor, and $\binom{2r + 2s + t}{2} - r - s$ introduced commutativity relators: one for each unordered pair of generators, except those pairs that belong to the same direct factor.
    Thus we need to find $r, s, t \in \N$ such that \[
        \binom{2r + 2s + t}{2} + s - r = n.
    \]

    Let $m$ be the smallest positive integer such that $\binom{m}{2} + \inthalf{m} \geq n$, and let $d := n - \binom{m}{2} \leq \inthalf{m}$.
    By choice of $m$, we have $\binom{m-1}{2} + \inthalf{m-1} \leq n$ (with equality only if $m=1$).
    As $\inthalf{m-1} + \inthalf{m} = m-1$ and $\binom{m-1}{2} + m-1 = \binom{m}{2}$, we have $\binom{m-1}{2} + \inthalf{m-1} = \binom{m}{2} - \inthalf{m}$, so $d \geq -\inthalf{m}$.

    If $d \geq 0$, let $s := d$ and $r := 0$, and if $d < 0$ let $r := -d, s := 0$, so that in either case $s-r = d$ and $r+s \leq \inthalf{m}$.
    Now we can let $t := m - 2r - 2s$, and thus $\binom{2r + 2s + t}{2} + s - r = \binom{m}{2} + d = n$ as required.
\end{proof}

\begin{eg}
    The group $A_p \times C_p^2$ has deficiency~$-5$, and $B_p \times C_p^2$ has deficiency~$-7$.
\end{eg}

\begin{rmk}
    There are infinitely many alternatives for the groups $A_p$, $B_p$ and $C_p$ with the same numbers of generators and relators: $(2, 2), (2, 4), (1, 1)$.
    In fact, there are choices with \emph{different} generator-relator pairs for which Theorem~\ref{thm:main} holds (the combinatorial argument in the proof is then different).
    At least for $p=2$ and $p=3$ there are $p$-groups with a minimal 3-generator 3-relator presentation \cite[\S 4]{johnsonrobertson}.
    We can replace $B_p$ by such a group, and find $p$-groups of arbitrary deficiency as direct products of deficiency zero groups with generator-relator pairs $(1, 1), (2, 2), (3, 3)$.
    For an alternative with pairs $(1, 1), (2, 4), (2, 5)$, we could replace $A_p$ with \(
        \gp{a,b}{a^{p^2}, \,b^{p^2}, \,[[a,b], a], \,[[a,b], b], \,[a, b]^p}
    \) which is order $p^5$ and has Schur multiplier $(\Z/p)^3$ (proving this, however, requires work).
    For a detailed analysis of variations on the construction, see \cite{thesis}.
\end{rmk}

\subsection*{Acknowledgements} I would like to thank my supervisor Martin Bridson for guidance, encouragement, and editing. I would also like to thank Inna Capdeboscq and Ian Leary for helpful conversations, Derek Holt for helpful correspondence, and Claudio Llosa Isenrich for suggesting the problem and helpful conversations thereafter.

\vspace{-3ex}
\bibliography{deficiencies}{}
\bibliographystyle{alpha}

\end{document}